\theoremstyle{definition}
\theoremstyle{plain}
\newtheorem{theorem}{Theorem}
\newtheorem{proposition}{Proposition}
\newtheorem{lemma}{Lemma}
\newtheorem{remark}{Remark}
\newtheorem{corollary}{Corollary}
\begin{document}

\title[Diophantine properties in $\beta$-dynamical systems]{A dichotomy law for the Diophantine properties in $\beta$-dynamical systems}

\author{Michael Coons}
\address{School of Mathematical and Physical Sciences, University of Newcastle, Callaghan, 2308, NSW, Australia}
\email{michael.coons@newcastle.edu.au}

\author{Mumtaz Hussain}
\address{School of Mathematical and Physical Sciences, University of Newcastle, Callaghan, 2308, NSW, Australia}
\email{mumtaz.hussain@newcastle.edu.au, drhussainmumtaz@gmail.com}

\author{Bao-Wei Wang}
\address{School of Mathematics and Statistics, Huazhong University of Science and Technology, 430074 Wuhan, China}
\email{bwei\_wang@hust.edu.cn}

\keywords{Beta-expansions, Diophantine approximation, Jarn\'ik-type theorem, Hausdorff measure}
\subjclass[2010]{Primary 11J83; Secondary 11K60, 11K16}%

\thanks{The research of M.~Coons and M.~Hussain is supported by the Australian Research Council (DE140100223) and the research of B.-W.~Wang is supported by NSFC of China (No. 11471130 and NCET-13-0236).}

\begin{abstract} Let $\beta>1$ be a real number and define the $\beta$-transformation on $[0,1]$ by $T_\beta:x\mapsto \beta x\bmod 1$. Further, define $$W_y(T_{\beta},\Psi):=\{x\in [0, 1]:|T_\beta^nx-y|<\Psi(n) \mbox{ for infinitely many $n$}\}$$ and $$W(T_{\beta},\Psi):=\{(x, y)\in [0, 1]^2:|T_\beta^nx-y|<\Psi(n) \mbox{ for infinitely many $n$}\},$$ where $\Psi:\mathbb{N}\to\mathbb{R}_{>0}$ is a positive function  such that $\Psi(n)\to 0$ as $n\to \infty$. In this paper, we show that each of the above sets obeys a Jarn\'ik-type dichotomy, that is, the generalised Hausdorff measure is either zero or full depending upon the convergence or divergence of a certain series. This work completes the metrical theory of these sets.
\end{abstract}

\maketitle

\section{Introduction}

Let $(X, T, \mu, \mathcal{B})$ be a measure-theoretic dynamical system, where $T:X\to X$ is a transformation on $X$, $\mu$ is a finite $T$-invariant Borel measure, and $\mathcal{B}$ is the associated Borel $\sigma$-algebra. The famous Poincar\'{e} recurrence theorem implies that for almost all $x\in X$, the $T$-orbit of $x$ is dense in $X$. That result is qualitative in nature, though it leads to the study of the quantitative properties of the distribution of the $T$-orbits of points in the space $X$, which is called  dynamical Diophantine approximation. More precisely, the spotlight is on the size of the set $$W_y(T,\Psi):=\left\{x\in X: |T^nx-y|<\Psi(n)\mbox{ for infinitely many $n$}\right\},$$
where $\Psi:\mathbb{N}\to \mathbb{R}_{>0}$ is a positive function such that $\Psi(n)\to 0$ as $n\to \infty$. The set $W_y(T,\Psi)$ is the dynamical analogue of the classical well-approximable set (e.g., see \cite{BaBV13, BaHH15, Do97, HK15}) and it has close connections to classic Diophantine approximation, for example when $T$ is an irrational rotation or Gauss transformation. It has been an object of significant interest since the pioneering works of Philipp \cite{Phi} on the $\mu$-measure of $W_y(T,\Psi)$ and Hill and Velani \cite{HiV95} on the Hausdorff dimension of $W_y(T,\Psi)$. It is easy to see from the definition that the set $W_y(T,\Psi)$ contains the points in $X$ whose $T$-orbit hits a shrinking target infinitely often; shrinking target problems for similar situations have been studied by Chernov and Kleinbock \cite{ChK}, Hill and Velani \cite{HiV95, HiV99}, and Tseng \cite{Ts08} among others.

When the system $(X, T, \mu, \mathcal{B})$ possesses strong mixing properties, similar to Khintchine's theorem and its generalisations in classical Diophantine approximation, the $\mu$-measure of $W_y(T,\Psi)$ is zero or full, according to the convergence or divergence of a certain series. Philipp \cite{Phi} proved this for $b$-ary expansions, $\beta$-expansions, and continued fractions.

Properties of $W_y(T,\Psi)$ are related to the distribution of the inverse images $\{T^{-n}y\}_{n\geqslant 1}$ of $y$. If the $\mu$-measure of $W_y(T,\Psi)$ obeys a dichotomy law, it means, in some sense, that  $\{T^{-n}y\}_{n\geqslant 1}$ is regularly distributed. In this way, one expects that tools from the theory of metric Diophantine approximation, such as regular systems \cite{BaS}, ubiquitous systems \cite{BeDo_99, DodV}, and the mass transference principle \cite{BV06}, can be used to derive the size of $W_y(T,\Psi)$ in terms of Hausdorff measure. More precisely,  similar to the Jarn\'ik dichotomy law, one expects that there should be a dichotomy law (zero or full) for the Hausdorff measure of the dynamically defined limsup set $W_y(T,\Psi)$.

Following the work of Hill and Velani \cite{HiV95, HiV99}, the Hausdorff dimension of the set $W_y(T, \Psi)$ has been  determined in many systems, from the system of rational expanding maps on their Julia sets to systems with non-finite Markov systems \cite{ShWa} and conformal iterated function systems \cite{LiW, Rev, Urb}. However, the Hausdorff measure of $W_y(T, \Psi)$ is currently known only for systems with finite Markov properties \cite{BerDV, HiV02}. We remedy this situation.

In this paper, we consider the Hausdorff measure of $W_y(T, \Psi)$ on $\beta$-expansions.
There are two reasons that we choose to consider this non-finite Markov system.
On the first hand, combining with Philipp's work, we hope to provide a complete metric theory on the size of $W_y(T, \Psi)$. Moreover, the non-finite Markov property for $\beta$-expansions remains a barrier to determining metric properties, so we want to see whether new ideas will be found in considering this concrete question. 
On the other hand, when given a full Lebesgue measure statement, the mass transference principle has proven a powerful tool in studying the Hausdorff measure of a limsup set in classic Diophantine approximation \cite{BV06} as well as dynamical Diophantine approximation for systems with finite Morkov properties \cite{BerDV}. But to the authors' knowledge, it seems that there are exceptions. For example, there is a full Lebesgue measure statement \cite{Kim} on the size of the limsup set $$
    \Big\{y: |n\alpha-y|<\psi(n) \ {\text{for infinitely many}}\ n\in \mathbb{N}\Big\},
    $$ but we do not think a direct application of mass transference principle would give even the right Hausdorff dimension, let alone its Hausdorff measure. For this non-finite Markov system, we have to give some modifications on the mass transference principle and also need to carefully choose a subset of $W_y(T, \Psi)$ to make the mass transference principle applicable.

Now let's focus on the $\beta$-expansion. For a real number $\beta>1$, define the transformation $T_\beta:[0,1]\to[0,1]$ by $$T_\beta: x\mapsto \beta x\bmod 1.$$ This map generates the $\beta$-dynamical system $([0,1], T_\beta)$. It is well known that $\beta$-expansion is a typical example of an expanding non-finite Markov system whose properties are reflected by the orbit of some critical point; here, it is the expansion of 1. General $\beta$-expansions have been widely studied in the literature, beginning with the pioneering works of R\'enyi \cite{Re57} and Parry \cite{Pa60}, and continuing with Hofbauer~\cite{Hof}, Persson and Schmeling \cite{PeSch}, Schmeling \cite{Schme}, and Tan and Wang \cite{TaW} to name just a few.

We are interested in the size of the dynamically defined limsup set
\begin{equation}\label{WyTbP}W_y(T_{\beta}, \Psi):=\left\{x\in [0,1]:|T_\beta^nx-y|<\Psi(n)\mbox{ for infinitely many $n$}\right\},\end{equation} where, as above, $\Psi:\mathbb{N}\to \mathbb{R}_{>0}$ is a positive function. Philipp \cite{Phi} showed that the Lebesgue measure or Parry measure of the set $W_y(T_{\beta}, \Psi)$ is zero or full according to the convergence or divergence of the series $\sum_{n\geqslant 1}\Psi(n)$. The Hausdorff dimension of $W_y(T_{\beta},\Psi)$ was given by Shen and Wang \cite{ShWa} (see also Bugeaud and Wang \cite{BuWa14}). As stated above, in this paper, we focus on the Hausdorff measure of $W_y(T_{\beta},\Psi)$.

Throughout this paper, a  \emph{dimension function}  is a function $f:\mathbb{R}\to \mathbb{R}$ such that $f(r)\to 0$ as $r\to 0$ and such that $f$ is increasing in $[0,r_0)$ for some $r_0>0$; $\mathcal{H}^f$ denotes the $f$-dimensional Hausdorff measure. For the definitions of Hausdorff dimension and Hausdorff measure, we refer to the standard texts by Bernik and Dodson \cite{BeDo_99} and Falconer \cite{F_03}. In this paper, we prove the following dichotomy law for $\beta$-dynamical systems, which is an analogue of classical Jarn\'ik-type theorems. It is worth noting that our results are the first concerning Hausdorff measures of $\beta$-dynamical systems.

\begin{theorem}\label{thm1} Let $\Psi:\mathbb{N}\to\mathbb{R}_{>0}$ be a positive function. Let $f$ be a dimension function such that $r^{-1}f(r)$ is monotonic. For any $\beta>1$, we have $$\mathcal{H}^f\Big(W_y(T_{\beta},\Psi)\Big)=\begin{cases} 0 & \mbox{when\ \ $\displaystyle\sum_{n\geqslant 1} f\left(\frac{\Psi(n)}{\beta^n}\right) \beta^n$\ converges},\\
\vspace{-.4cm}&\\
\mathcal{H}^f([0,1]) & \mbox{when\ \ $\displaystyle\sum_{n\geqslant 1} f\left(\frac{\Psi(n)}{\beta^n}\right) \beta^n$\ diverges}.\end{cases}$$
\end{theorem}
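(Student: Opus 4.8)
The plan is to establish the two halves of the dichotomy by different means: the convergence half by a routine covering estimate, and the divergence half by combining Philipp's Lebesgue dichotomy with the Mass Transference Principle of Beresnevich--Velani. Throughout I would use the standard geometry of $\beta$-expansions: writing $E_n:=\{x\in[0,1]:|T_\beta^n x-y|<\Psi(n)\}$, we have $W_y(T_\beta,\Psi)=\limsup_n E_n$, and if $I$ denotes a cylinder of order $n$ (a maximal interval on which $T_\beta^n$ is affine, with slope $\beta^n$), then $E_n\cap I$ is an interval of length at most $2\Psi(n)\beta^{-n}$, which is centred at the unique preimage $x_{n,I}:=(T_\beta^n|_I)^{-1}(y)$ whenever $I$ is \emph{full}, i.e. $T_\beta^n(I)=[0,1)$ and $|I|=\beta^{-n}$.

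For the convergence case I would simply cover. Since the number of order-$n$ cylinders is $O(\beta^n)$ and each contributes an interval of diameter at most $2\Psi(n)\beta^{-n}$, for every $N$ the family $\{E_n\cap I:n\ge N\}$ covers $W_y(T_\beta,\Psi)$ by intervals of diameter tending to $0$, whence
\[
\mathcal{H}^f\big(W_y(T_\beta,\Psi)\big)\le \liminf_{N\to\infty}\sum_{n\ge N}O(\beta^n)\,f\!\big(2\Psi(n)\beta^{-n}\big).
\]
Using the monotonicity of $r^{-1}f(r)$ to absorb the factor $2$ into a constant, the right-hand side is a tail of the convergent series $\sum_n\beta^n f(\Psi(n)\beta^{-n})$ and so vanishes, giving $\mathcal{H}^f(W_y(T_\beta,\Psi))=0$.

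For the divergence case I would aim for the lower bound $\mathcal{H}^f(W_y(T_\beta,\Psi))\ge \mathcal{H}^f([0,1])$, the reverse inequality being trivial. Restricting to full cylinders, set $B_{n,I}:=B(x_{n,I},\Psi(n)\beta^{-n})\subseteq E_n$ for $I$ full of order $n$; enumerating these as $\{B_i\}$ gives $\limsup_i B_i\subseteq W_y(T_\beta,\Psi)$. The Mass Transference Principle (applicable since $r^{-1}f(r)$ is monotonic) reduces the desired $\mathcal{H}^f$-statement for $\limsup_i B_i$ to the \emph{Lebesgue} full-measure statement $\mathcal{H}^1(B\cap\limsup_i B_i^f)=\mathcal{H}^1(B)$ for every ball $B$, where $B_i^f=B(x_{n,I},f(\Psi(n)\beta^{-n}))$. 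The key observation is that inside a full cylinder $I$ the enlarged ball $B_i^f$ corresponds, under the affine map $T_\beta^n$ of slope $\beta^n$, to the target $\{x\in I:|T_\beta^n x-y|<\tilde\Psi(n)\}$ with $\tilde\Psi(n):=\beta^n f(\Psi(n)\beta^{-n})$ (capping $\tilde\Psi$ at, say, $1/2$, which only helps and preserves divergence). Thus, up to the full-cylinder restriction, $\limsup_i B_i^f$ is $W_y(T_\beta,\tilde\Psi)$, and $\sum_n\tilde\Psi(n)=\sum_n\beta^n f(\Psi(n)\beta^{-n})=\infty$ is exactly the divergence hypothesis, so Philipp's dichotomy would deliver full Lebesgue measure.

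The main obstacle — and the point at which the non-finite Markov property bites — is precisely the full-cylinder restriction: Philipp's theorem gives full measure for $W_y(T_\beta,\tilde\Psi)$ using \emph{all} cylinders, whereas the transference set-up only controls targets sitting in full cylinders, which in a $\beta$-system are irregularly distributed. The heart of the argument is therefore a Lebesgue full-measure statement for the \emph{full-cylinder-restricted} limsup, for which I would need two inputs: (i) a combinatorial lemma from the $\beta$-expansion literature guaranteeing that full cylinders are abundant and well spread — concretely, that a positive proportion of the mass at a positive density of scales lies in full cylinders, or that every cylinder contains a full subcylinder of bounded additional depth; and (ii) a quasi-independence / second-moment estimate, supplied by the exponential mixing of the Parry measure (which is comparable to Lebesgue), to run a divergence Borel--Cantelli argument on these targets and upgrade ``positive proportion'' to ``full measure''. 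Packaging (i)--(ii) as a local ubiquity statement for the centres $\{x_{n,I}\}$ and feeding it into a version of the Mass Transference Principle adapted to the subset of full cylinders is where the careful choice of subset and the modification of the principle alluded to in the introduction are needed; everything else is bookkeeping with the dimension function $f$.
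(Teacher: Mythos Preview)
Your convergence half matches the paper's argument exactly.

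For the divergence half, you correctly identify the central difficulty --- that the mass transference correspondence between the shrunk targets and their $f$-enlargements breaks down on non-full cylinders, since an enlarged target can contribute Lebesgue mass while the original is empty --- but you propose to overcome it by restricting to full cylinders and then proving a separate full-measure Borel--Cantelli statement for that restricted family via abundance-of-full-cylinders and mixing estimates. The paper sidesteps the obstacle entirely with a different and much lighter device: it replaces the two-sided condition $|T_\beta^n x - y|<\Psi(n)$ by the \emph{one-sided} condition $0\le T_\beta^n x - y<\Psi(n)$, giving a subset $W'_y(T_\beta,\Psi)\subseteq W_y(T_\beta,\Psi)$ for which Philipp's Lebesgue dichotomy still holds. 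The geometric payoff is that the putative preimage point $y_n(\bar\epsilon)$ always lies at or to the right of the left endpoint of $I_n(\bar\epsilon)$, so for any $\rho>0$ the set $I_n(\bar\epsilon)\cap[y_n(\bar\epsilon),y_n(\bar\epsilon)+\rho)$ is either empty or a half-interval $[y_n(\bar\epsilon),y_n(\bar\epsilon)+r)$ with $r\le\rho$. Writing $r_n(\bar\epsilon)$ and $t_n(\bar\epsilon)$ for the truncated radii corresponding to $\rho=\Psi(n)\beta^{-n}$ and $\rho=f(\Psi(n)\beta^{-n})$ respectively, an elementary minimum comparison gives $f(r_n(\bar\epsilon))\ge t_n(\bar\epsilon)$ for large $n$: crucially, if the enlarged set is nonempty then so is the shrunk one. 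Thus the enlarged limsup is contained in the $f$-blowup of the shrunk limsup over \emph{all} cylinders, with no restriction to full ones, and a minor variant of the Mass Transference Principle for half-intervals $[x_n,x_n+r_n)$ in place of balls (stated in the paper as Proposition~\ref{p4}) finishes the argument.

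Your full-cylinder route is plausible and the ingredients you name are genuine, but you have not actually established the full-Lebesgue-measure statement for the restricted family, and doing so would require nontrivial additional input. The paper's one-sided trick buys the same conclusion with essentially no extra technology beyond the observation that the $K_{G,B}$ Lemma goes through for half-intervals.
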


The condition `$r^{-1}f(r)$ is monotonic' is not a particularly restrictive condition, and it is the main ingredient in unifying both the Lebesgue and Hausdorff measure statements; for details see Beresnevich and Velani \cite{BV06}. To be precise,  $\mathcal{H}^f$ is proportional to the standard Lebesgue measure when $f(r)\asymp r^1$. When $f(r)\asymp r^s$, we write $\mathcal{H}^s$ in place of $\mathcal{H}^f$, and whenever $\Psi(r)=r^{-\tau}$ for $\tau>0$, we write $W_y(T_\beta, \tau)$ in place of $W_y(T_\beta, \Psi)$.

Theorem \ref{thm1} can be further generalised by considering the set $$W(T_\beta,\Psi):=\left\{(x, y)\in [0,1]^2:|T_\beta^nx-y|<\Psi(n)\mbox{ for infinitely many $n$}\right\}.$$
This set can be viewed as the doubly metrical $\beta$-dynamical analogue of the classic Diophantine set as given by Dodson \cite{Do97}. The Hausdorff dimension of $W(T_\beta, \Psi)$ was given by Ge and L\"{u} \cite{GeLu}.
Its Hausdorff measure is given as follows.
\begin{theorem}\label{thm2} Let $\Psi:\mathbb{N}\to \mathbb{R}_{>0}$ be a positive function. Let $g$ be a dimension function such that $r^{-2}g(r)$ is monotonic. For any $\beta>1$, we have $$\mathcal{H}^g\Big(W(T_{\beta}, \Psi)\Big)=\begin{cases} 0 & \mbox{when\ \ $\displaystyle\sum_{n\geqslant 1} g\left(\frac{\Psi(n)}{\beta^n}\right) \frac{\beta^{2n}}{\Psi(n)}$\ converges}, \\
\vspace{-.4cm}&\\
\mathcal{H}^g\left([0,1]^2\right) & \mbox{when\ \ $\displaystyle\sum_{n\geqslant1} g\left(\frac{\Psi(n)}{\beta^n}\right) \frac{\beta^{2n}}{\Psi(n)}$\ diverges}.\end{cases}$$
\end{theorem}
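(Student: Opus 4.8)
The plan is to treat the two cases separately, exploiting that at the $n$-th level the natural resolution is $\delta_n:=\Psi(n)/\beta^n$. Over each \emph{full} cylinder $I$ of order $n$ (an interval of length $\beta^{-n}$ on which $T_\beta^n$ is an affine bijection onto $[0,1)$), the target set $\{(x,y):x\in I,\ |T_\beta^n x-y|<\Psi(n)\}$ is a thin slanted strip of slope $\beta^n$; decomposing $I$ into sub-intervals of length $\delta_n$ shows that this strip is covered by $\asymp\beta^n/\Psi(n)$ squares of side $\delta_n$, so that, summing over the $\asymp\beta^n$ cylinders of order $n$, the level set $E_n:=\{(x,y):|T_\beta^n x-y|<\Psi(n)\}$ is covered by $N_n\asymp\beta^{2n}/\Psi(n)$ squares of side $\delta_n$. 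Since $W(T_\beta,\Psi)=\limsup_n E_n$, this single geometric observation is what makes $\sum_n g(\Psi(n)/\beta^n)\,\beta^{2n}/\Psi(n)=\sum_n g(\delta_n)N_n$ the correct quantity.

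For the convergence case I would use this cover directly. For every $N$ the set $W(T_\beta,\Psi)$ is contained in $\bigcup_{n\ge N}E_n$, whence
\[
\mathcal H^g\big(W(T_\beta,\Psi)\big)\ \le\ \liminf_{N\to\infty}\sum_{n\ge N} N_n\,g(\delta_n)\ \lesssim\ \liminf_{N\to\infty}\sum_{n\ge N} g\!\left(\frac{\Psi(n)}{\beta^n}\right)\frac{\beta^{2n}}{\Psi(n)}=0,
\]
the tail of a convergent series; this is just the $\mathcal H^g$-form of the convergence Borel--Cantelli lemma. The only external input is the R\'enyi--Parry bound that the number of order-$n$ cylinders lies between $\beta^n$ and $\beta^{n+1}/(\beta-1)$, which bounds the covering count from above uniformly in $\beta$; non-full cylinders only reduce the count and cause no difficulty here.

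The divergence case is the substance, and here I would adapt the strategy announced in the introduction. The first step is to promote the Lebesgue statement: using the abundance and equidistribution of full cylinders of order $n$ for $\beta$-expansions, one shows that the strips $E_n$ form a \emph{ubiquitous system} for two-dimensional Lebesgue measure, the uniform version of Philipp's divergence result $\sum\Psi(n)=\infty\Rightarrow\mathcal H^2(W)=\mathcal H^2([0,1]^2)$. The second step is to transfer this to a full $\mathcal H^g$ statement by a mass transference principle. Because the targets are thin anisotropic rectangles of size $\delta_n\times\Psi(n)$ rather than balls, the Beresnevich--Velani principle does not apply off the shelf; I would instead build a Cantor-type subset of $W(T_\beta,\Psi)$ supported on carefully chosen full cylinders and, within each, on a single square of side $\delta_n$, and distribute a mass $\mu$ with $\mu(B(z,r))\lesssim g(r)$ at every relevant scale. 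The monotonicity of $r^{-2}g(r)$ is precisely the hypothesis (as the paper notes, following Beresnevich--Velani) that lets one interpolate the mass across the two side-lengths and verify this bound; the mass distribution principle then gives $\mathcal H^g(W)>0$, and localising the whole construction inside an arbitrary square upgrades positivity to full $\mathcal H^g$-measure. Divergence of $\sum_n g(\delta_n)\beta^{2n}/\Psi(n)$ is exactly what guarantees the construction survives through infinitely many levels.

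The main obstacle is the simultaneous presence of anisotropy and the non-finite Markov property. Controlling $\mu(B(z,r))$ requires knowing how balls of every radius meet the slanted strips, and the estimate changes character as $r$ crosses the two scales $\delta_n$ and $\Psi(n)$; this is where the standard principle must be modified. Compounding this, full cylinders of a given order can be scarce for some $\beta$, so I expect to pass to a subsequence of levels along which full cylinders are abundant and along which the series still diverges, and to carry out the construction only on that subsequence. Verifying that such a subsequence exists and that the chosen subset of $W(T_\beta,\Psi)$ remains large enough to carry the full mass is, I expect, the hardest part of the argument.
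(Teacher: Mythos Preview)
Your convergence argument is essentially the paper's: cover the slanted strip in each cylinder by $\asymp\beta^n/\Psi(n)$ squares of side $\delta_n=\Psi(n)/\beta^n$, sum over the $\asymp\beta^n$ cylinders, and take the tail. The paper phrases the decomposition slightly differently (it partitions the $y$-axis into intervals $J_n(i)$ of length $\delta_n$ rather than the $x$-axis), but the estimate and the count are identical.

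For divergence, however, you have chosen a much harder road than the paper. You propose to work directly in two dimensions: establish ubiquity of the anisotropic strips, build a Cantor set supported on full cylinders, and run a bespoke mass-distribution argument tailored to rectangles of size $\delta_n\times\Psi(n)$, anticipating difficulties from the two competing scales and from the scarcity of full cylinders. The paper bypasses all of this. It simply sets $f(r)=r^{-1}g(r)$, observes that
\[
\sum_{n\geqslant1} f\!\left(\frac{\Psi(n)}{\beta^n}\right)\beta^n=\sum_{n\geqslant1} g\!\left(\frac{\Psi(n)}{\beta^n}\right)\frac{\beta^{2n}}{\Psi(n)}=\infty,
\]
and invokes Theorem~\ref{thm1} to get $\mathcal H^{f}(W_y(T_\beta,\Psi))=\mathcal H^{f}([0,1])$ for \emph{every} $y\in[0,1]$. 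Then the Beresnevich--Velani slicing lemma (Lemma~\ref{slicing}), applied with $V$ the $x$-axis and $S=[0,1]$ on the $y$-axis, immediately gives $\mathcal H^{g}(W(T_\beta,\Psi))=\mathcal H^{g}([0,1]^2)$. The hypothesis that $r^{-2}g(r)$ is monotonic is used only to ensure $r^{-1}f(r)$ is monotonic, feeding the hypothesis of Theorem~\ref{thm1}. No two-dimensional mass transference, no rectangles, no subsequence of good levels is needed; the anisotropy and the non-Markov structure were already absorbed in the one-dimensional proof. Your approach might be pushed through, but the paper's slice-and-lift argument is a few lines once Theorem~\ref{thm1} is available.
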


An immediate consequences of Theorems \ref{thm1} and \ref{thm2} are not only the respective Hausdorff dimension results, but also that $\mathcal{H}^s(W_y(T_\beta, \tau))=\mathcal{H}^s([0,1])$ when $s=\dim_H W_y(T_\beta, \tau)=1/(\tau+1),$ and $\mathcal{H}^s(W(T_\beta, \tau))=\mathcal{H}^s([0,1]^2)$ when $s=\dim_H W(T_\beta, \tau)=1+1/(\tau+1).$ In general, a Hausdorff measure result is much stronger than a Hausdorff dimension result as it allows one to distinguish sets of equal Hausdorff dimension. In fact, more subtle examples can be given to reiterate the significance of each of Theorems~\ref{thm1} and \ref{thm2}. For example, regarding Theorem \ref{thm2}, for $\tau>0$, set $\Psi_1(n)=\left(\beta^n\right)^{-\tau}$ and for some $\varepsilon>0$, set $\Psi_\varepsilon(n)= \left(\beta^n\right)^{-\tau}\left(\log(\beta^n)\right)^{-(1+\varepsilon)(\tau+1)/(\tau+2)}.$ We then have the following exact logarithmic order for $\beta$-approximation.

\begin{corollary} Let $g(r)=r^{(2+\tau)/(1+\tau)}$. For any $\varepsilon>0$, 
$$\mathcal{H}^{g}(W(T_\beta, \Psi_1))=\mathcal{H}^{g}([0,1]^2) \quad\mbox{and}\quad\mathcal{H}^{g}(W(T_\beta, \Psi_\varepsilon))=0.$$ Consequently, the set $W(T_\beta, \Psi_1)\setminus W(T_\beta, \Psi_\varepsilon)$ is uncountable.
\end{corollary}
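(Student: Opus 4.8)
The plan is to read both measure statements straight off Theorem~\ref{thm2}, applied with the gauge $g(r)=r^{s}$ where $s=(2+\tau)/(1+\tau)$, and then to extract uncountability from the resulting gap in $\mathcal{H}^g$-measure. First I would check that $g$ is admissible for Theorem~\ref{thm2}: one has $r^{-2}g(r)=r^{\,s-2}=r^{-\tau/(1+\tau)}$, which is monotonic, and $g(r)\to 0$ as $r\to 0$, so the hypotheses hold for every $\beta>1$.

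The engine of the computation is the identity $(1+\tau)s=2+\tau$, which expresses precisely that $s$ is the critical exponent. Writing a target function in the common form $\Psi(n)=\beta^{-n\tau}\ell_n$ with a logarithmic factor $\ell_n$ (and $\ell_n\equiv 1$ for $\Psi_1$), the summand appearing in Theorem~\ref{thm2} becomes
\[
g\!\left(\frac{\Psi(n)}{\beta^{n}}\right)\frac{\beta^{2n}}{\Psi(n)}
=\beta^{\,n[(2+\tau)-(1+\tau)s]}\,\ell_n^{\,s-1}
=\ell_n^{\,1/(1+\tau)},
\]
since the power of $\beta$ cancels identically and $s-1=1/(1+\tau)$. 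For $\Psi_1$ we have $\ell_n=1$, so the summand equals $1$ and the series $\sum_n 1$ diverges; hence Theorem~\ref{thm2} gives $\mathcal{H}^g(W(T_\beta,\Psi_1))=\mathcal{H}^g([0,1]^2)$. For $\Psi_\varepsilon$ we have $\ell_n=(\log\beta^{n})^{-c}$, so the summand reduces to $(n\log\beta)^{-\theta}$ with $\theta=c/(1+\tau)$; the logarithmic exponent $c$ defining $\Psi_\varepsilon$ is calibrated precisely so that $\theta>1$, whence the series converges and Theorem~\ref{thm2} yields $\mathcal{H}^g(W(T_\beta,\Psi_\varepsilon))=0$.

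Finally I would record the uncountability. Since $s<2$, the plane has $\mathcal{H}^g([0,1]^2)=\mathcal{H}^s([0,1]^2)=+\infty$, so the first computation produces a set of positive $\mathcal{H}^g$-measure while the second produces an $\mathcal{H}^g$-null set. As $W(T_\beta,\Psi_\varepsilon)$ is $\mathcal{H}^g$-null, subadditivity gives $\mathcal{H}^g\big(W(T_\beta,\Psi_1)\setminus W(T_\beta,\Psi_\varepsilon)\big)\geqslant \mathcal{H}^g(W(T_\beta,\Psi_1))>0$. Because any countable set is $\mathcal{H}^g$-null whenever $g(r)\to 0$, a set carrying positive $\mathcal{H}^g$-measure must be uncountable, which is the last assertion. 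The only genuine obstacle here is the exponent bookkeeping: everything hinges on the exact cancellation $(1+\tau)s=2+\tau$ and on verifying the inequality $\theta>1$ for every $\varepsilon>0$, after which both series are elementary (a harmonic-type divergence and a convergent $p$-series with logarithmic argument).
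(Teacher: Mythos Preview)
Your overall strategy matches the paper's: the corollary is stated there without proof, as a direct consequence of Theorem~\ref{thm2}, and you correctly reduce both measure statements to the convergence/divergence of the series in that theorem with $g(r)=r^{s}$, $s=(2+\tau)/(1+\tau)$. The cancellation $(1+\tau)s=2+\tau$ and the reduction of the summand to $\ell_n^{1/(1+\tau)}$ are right, the divergence for $\Psi_1$ is correct, and your uncountability argument from the $\mathcal{H}^g$-gap is clean.

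There is, however, a genuine gap in the convergence half. You write that the logarithmic exponent $c$ in $\Psi_\varepsilon$ is ``calibrated precisely so that $\theta>1$'', but you never check this, and with the exponent actually printed in the paper it is \emph{false}. With $c=(1+\varepsilon)(\tau+1)/(\tau+2)$ one gets
\[
\theta=\frac{c}{1+\tau}=\frac{1+\varepsilon}{\tau+2},
\]
which exceeds $1$ only when $\varepsilon>\tau+1$. For $0<\varepsilon\leqslant\tau+1$ the series $\sum_n (n\log\beta)^{-\theta}$ diverges, so Theorem~\ref{thm2} does not yield $\mathcal{H}^g(W(T_\beta,\Psi_\varepsilon))=0$ in that range, and the corollary as stated cannot hold ``for any $\varepsilon>0$''. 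This is almost certainly a misprint in the definition of $\Psi_\varepsilon$: the choice that makes your argument go through for every $\varepsilon>0$ is $c=(1+\varepsilon)(1+\tau)$, which gives $\theta=1+\varepsilon>1$. Your sketch inherits the defect by taking the calibration on faith; you should carry out the one-line computation of $\theta$, note the apparent typo, and work with the corrected exponent.
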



\section{Preliminaries}

In this section, we first collect some basic properties of $\beta$-expansions and fix some notation. We then state versions of Philipp's result \cite{Phi} and Beresnevich and Velani's slicing lemma \cite{BV06_IMRN}, before giving a variant of their famous mass transference principle \cite{BV06} fit for our use.

For a real number $x\geqslant 0$, we write $\lfloor x\rfloor$ for the integer part of $x$. Using the $\beta$-transformation $T_\beta$,
each $x\in[0,1]$ can be uniquely expressed as a finite or an infinite series, known as the $\beta$-expansion of $x$; see R\'enyi \cite{Re57}.
That is, for each $x\in[0,1]$, we have \begin{equation}\label{1} x=\sum_{i\geqslant 1} \frac{\epsilon_i(x,\beta)}{\beta^i},\end{equation} where $\epsilon_i(x,\beta)=\lfloor \beta T_\beta^{i-1}x\rfloor$ for each $i\geqslant 1$.
Now, for any $x\in[0,1]$ and $n\in\mathbb{N}$, by the definition of $\beta$-expansion,
\begin{equation}\label{eq5}
x=\frac{\epsilon_1(x, \beta)}{\beta}+\cdots+\frac{\epsilon_n(x, \beta)+T_\beta^nx}{\beta^n}.
\end{equation}

The $\beta$-expansion of $1$ is of significant importance. To highlight this, we define an infinite sequence related to the expansion of $1$. If the expansion of $1$ in \eqref{1} is infinite, that is, $\epsilon_n(1,\beta)\ne 0$ for infinitely many $n$, then define $$
(\epsilon^*_1, \epsilon^*_2,\ldots)=(\epsilon_1(1,\beta),\epsilon_2(1,\beta),\ldots),
$$
and if the expansion of $1$ in \eqref{1} is finite, that is $$
1=\frac{\epsilon_1(1,\beta)}{\beta}+\cdots+\frac{\epsilon_n(1,\beta)}{\beta^n}, \  {\rm with}\ \epsilon_n(1,\beta)\ne 0,
$$ then define
$$
(\epsilon^*_1, \epsilon^*_2,\ldots)=(\epsilon_1(1,\beta),\ldots, \epsilon_{n-1}(1,\beta), \epsilon_n(1,\beta)-1)^{\infty},
$$ where $w^\infty$ denotes the periodic sequence 
$(w, w,\ldots)$ for a finite word $w$.
Each of the sequences $(\epsilon^*_1, \epsilon^*_2,\ldots)$ are called the {\em infinite digit sequence} of the expansion of 1.

For each $n\in\mathbb{N}$, let $D_{\beta,n}$ denote all {\em admissible sequences} of length $n$, that is, $$
D_{\beta, n}=\left\{(\epsilon_1,\ldots, \epsilon_n)\in \mathbb{Z}_{\geqslant 0}^n: \exists x\in [0,1]\ {\rm such\ that}\ \epsilon_i(x,\beta)=\epsilon_i, 1\leqslant i\leqslant n\right\}.
$$
The characterisation of the elements in $D_{\beta, n}$ and its cardinality $\# D_{\beta, n}$ are given by Parry \cite{Pa60} and R\'enyi \cite{Re57} {in the lemma below. First recall the definition of the lexicographical order $\preceq$. We write $$(\epsilon_1, \epsilon_2, \ldots, \epsilon_n)
\preceq (\epsilon_1', \epsilon_2', \ldots, \epsilon_n')$$
 if for every $j \geqslant 1$ we have $\epsilon_j \leqslant\epsilon_j'$}
\begin{lemma}[Parry, R\'enyi] A non-negative integral word $(\epsilon_1,\ldots, \epsilon_n)$ belongs to $D_{\beta, n}$ if and only if, in the lexicographical order,
$$(\epsilon_{k+1},\ldots,\epsilon_n)\preceq (\epsilon^*_1,\ldots, \epsilon^*_{n-k}),\ {\text{for all}}\ 0\leqslant k<n.$$
Moreover,
\begin{equation}\label{lem1}\beta^n\leqslant \# D_{\beta,n}\leqslant \frac{\beta^{n+1}}{\beta-1}.\end{equation}
\end{lemma}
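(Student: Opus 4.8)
The plan is to treat the two assertions separately: first the lexicographic characterisation of $D_{\beta,n}$, which is Parry's admissibility criterion, and then the quantitative bounds on $\# D_{\beta,n}$, which follow from the metric structure of the rank-$n$ cylinders. For the characterisation the engine is the \emph{monotonicity} of the greedy expansion: the map $x\mapsto (\epsilon_1(x,\beta),\epsilon_2(x,\beta),\ldots)$ is strictly increasing with respect to the lexicographic order $\preceq$ on sequences, and the infinite digit sequence $(\epsilon^*_1,\epsilon^*_2,\ldots)$ is by construction the supremum of the expansions of points in $[0,1)$. I would first record these two facts, namely monotonicity and the inequality $(\epsilon_1(z,\beta),\epsilon_2(z,\beta),\ldots)\preceq(\epsilon^*_1,\epsilon^*_2,\ldots)$ for every $z\in[0,1)$; the latter is exactly the reason the infinite digit sequence of $1$ is defined by the periodic modification in the finite-expansion case.

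For the \emph{necessity} of the condition, suppose $(\epsilon_1,\ldots,\epsilon_n)\in D_{\beta,n}$ and pick $x$ realising it. For each $0\le k<n$ the point $T_\beta^k x$ lies in $[0,1)$ and, since $T_\beta$ acts as the shift on digits, its expansion begins with $(\epsilon_{k+1},\ldots,\epsilon_n)$; comparing the first $n-k$ digits with the supremal sequence $(\epsilon^*_i)$ yields $(\epsilon_{k+1},\ldots,\epsilon_n)\preceq(\epsilon^*_1,\ldots,\epsilon^*_{n-k})$. For \emph{sufficiency}, given a word satisfying all of these inequalities I would extend it to an infinite admissible sequence, form $x=\sum_{i\ge1}\epsilon_i\beta^{-i}$, and verify by induction that the greedy algorithm recovers the digits, i.e. that $\lfloor\beta T_\beta^{i-1}x\rfloor=\epsilon_i$ and $T_\beta^i x\in[0,1)$ at each step; the shifted lexicographic inequalities are precisely what guarantee that the remainder never reaches $1$. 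I expect this sufficiency direction, together with the careful bookkeeping of strict versus non-strict inequalities inherited from the finite/infinite dichotomy in the expansion of $1$, to be the main obstacle.

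For the counting I would work with the rank-$n$ cylinders $I_n(\epsilon_1,\ldots,\epsilon_n)=\{x:\epsilon_i(x,\beta)=\epsilon_i,\ 1\le i\le n\}$, which are in bijection with $D_{\beta,n}$, are essentially disjoint, and cover $[0,1)$. On each such cylinder $I_n(w)$ the map $T_\beta^n$ is affine with slope $\beta^n$ and carries it onto an interval $[0,s_w)$ with $s_w\in(0,1]$, so that $|I_n(w)|=\beta^{-n}s_w\le\beta^{-n}$. Since the lengths sum to $1$, there must be at least $\beta^n$ cylinders, which gives the lower bound $\#D_{\beta,n}\ge\beta^n$.

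For the upper bound I would count, for each $w\in D_{\beta,n}$, the number $c(w)$ of admissible one-digit extensions $(w,j)$: these correspond exactly to the rank-$1$ cylinders meeting the image $[0,s_w)$, so $c(w)=\lceil\beta s_w\rceil\le\beta s_w+1$. Writing $A_m=\#D_{\beta,m}$ and summing over $w$, and using $\sum_w s_w=\beta^n\sum_w|I_n(w)|=\beta^n$, I obtain
\[
A_{n+1}=\sum_{w\in D_{\beta,n}}c(w)\le\beta\sum_{w\in D_{\beta,n}}s_w+A_n=\beta^{n+1}+A_n.
\]
Iterating the recursion $A_{n+1}\le A_n+\beta^{n+1}$ from $A_0=1$ and summing the resulting geometric series yields $A_n\le(\beta^{n+1}-1)/(\beta-1)\le\beta^{n+1}/(\beta-1)$, which is the claimed upper bound.
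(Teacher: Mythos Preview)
The paper does not give its own proof of this lemma: it is stated as a classical result attributed to Parry \cite{Pa60} and R\'enyi \cite{Re57}, and the authors simply cite those sources for both the admissibility characterisation and the cardinality bounds. There is therefore nothing in the paper to compare your argument against.

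That said, your outline is essentially the standard route and is correct in its architecture. The necessity of the lexicographic condition via the shift $T_\beta^k x\in[0,1)$ and the supremality of $(\epsilon^*_i)$ is exactly Parry's argument; the sufficiency direction is, as you anticipate, the more delicate half, and the strict/non-strict bookkeeping is precisely why the periodic modification of a finite expansion of $1$ is introduced. For the counting, your lower bound from $|I_n(w)|\le\beta^{-n}$ and $\sum_w|I_n(w)|=1$ is immediate, and the recursive upper bound via $c(w)=\lceil\beta s_w\rceil\le\beta s_w+1$ together with $\sum_w s_w=\beta^n$ gives $A_{n+1}\le A_n+\beta^{n+1}$ and hence the geometric sum $(\beta^{n+1}-1)/(\beta-1)$; this is clean and matches R\'enyi's bound. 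The one place to be careful when you write this up in full is the description of the image $T_\beta^n(I_n(w))$: it is indeed an interval with left endpoint $0$, but verifying that it is exactly $[0,\beta^n|I_n(w)|)$ (rather than merely contained in it) uses that $T_\beta^n$ is a bijection from $I_n(w)$ onto its image, which follows from the affine form of $T_\beta^n$ on a cylinder.
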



For each $\bar{\epsilon}=(\epsilon_1,\ldots,\epsilon_n)\in D_{\beta,n}$ with $n\geqslant1$, we define the $n$th order cylinder $I_n(\bar{\epsilon})$ by
$$I_n(\bar{\epsilon})=I_n(\epsilon_1, \ldots, \epsilon_n)=\left\{x\in[0,1]: \epsilon_i(x, \beta)=\epsilon_i\ \mbox{for all $1\leqslant i\leqslant n$}\right\}.$$  The cylinder $I_n(\bar{\epsilon})$ is a non-empty interval with left-endpoint $$
\frac{\epsilon_1}{\beta}+\cdots+\frac{\epsilon_n}{\beta^n}
$$  and with length at most $\beta^{-n}$. The exact length of a cylinder is given in \cite{FanWa}  {and it depends on the digit sequence $\epsilon_1, \ldots, \epsilon_n$.}

Before moving on to results concerning various measures and sets, we note that the interval $[0,1]$ is partitioned by the cylinders $I_n(\bar{\epsilon})$; that is, we have the disjoint union
\begin{equation}\label{eq1}
[0,1]=\bigcup_{\bar{\epsilon}\in D_{\beta,n}}I_n(\bar{\epsilon}).
\end{equation}

In the rest of this paper, we use the following notation concerning `size'. For a set $A$, we denote the Lebesgue measure of $A$ by $\mathfrak{L}(A)$, and we denote the diameter of an interval $I$ by $|I|$. Of course, for an interval $I$, we have $|I|=\mathfrak{L}(I)$. Note that we also use the notation $|\cdot|$ to denote absolute value; we believe the context of usage is unambiguous.
With this notation, we set out our more measure-theoretic preliminaries.

We start by recalling the following metrical result of Philipp \cite{Phi} concerning the Lebesgue measure of the set $W_y(T_\beta, \Psi)$ and a one-sided variant.

\begin{proposition}[Philipp]\label{p1}
Let $W_y(T_\beta, \Psi)$ be the set defined in \eqref{WyTbP}, and define $$W'_y(T_{\beta}, \Psi):=\left\{x\in [0,1]: 0\leqslant T_\beta^nx-y<\Psi(n)\ \mbox{for infinitely many $n$}\right\}.$$ Then
$$\mathfrak{L}\left(W_y(T_\beta, \Psi)\right)=\mathfrak{L}\left(W'_y(T_\beta, \Psi)\right)=\begin{cases} 0 & \mbox{when\ \ $\displaystyle\sum_{n\geqslant 1} \Psi(n)$\ converges}, \\
\vspace{-.4cm}&\\
1 & \mbox{when\ \ $\displaystyle\sum_{n\geqslant1} \Psi(n)$ diverges}.\end{cases}$$
\end{proposition}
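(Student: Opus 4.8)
The plan is to realise both sets as $\limsup$ sets and run the two halves of a Borel--Cantelli dichotomy. Write $A_n:=\{x\in[0,1]:|T_\beta^n x-y|<\Psi(n)\}$, so that $W_y(T_\beta,\Psi)=\limsup_n A_n$; for the one-sided set replace $A_n$ by $A_n':=\{x:0\leqslant T_\beta^n x-y<\Psi(n)\}$. The first step is to estimate $\mathfrak{L}(A_n)$. Using the cylinder partition \eqref{eq1} and the fact that $T_\beta^n$ has constant slope $\beta^n$ on each $I_n(\bar\epsilon)$, the preimage of $(y-\Psi(n),y+\Psi(n))$ inside a single cylinder has measure at most $2\Psi(n)\beta^{-n}$; summing over the $\#D_{\beta,n}$ cylinders and invoking the upper bound in \eqref{lem1} gives $\mathfrak{L}(A_n)\ll\Psi(n)$. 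Since a positive proportion of the order-$n$ cylinders are full (their image under $T_\beta^n$ is all of $[0,1]$), restricting the preimage count to these yields the matching lower bound, so $\mathfrak{L}(A_n)\asymp\Psi(n)$ with constants depending only on $\beta$. The one-sided set $A_n'$ has the same order, so the two sets obey the same dichotomy. In the convergence case $\sum_n\mathfrak{L}(A_n)\ll\sum_n\Psi(n)<\infty$, and the convergence Borel--Cantelli lemma immediately gives $\mathfrak{L}(\limsup_n A_n)=0$.

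For the divergence case I would pass to the Parry measure $\mu$, the $T_\beta$-invariant probability measure equivalent to Lebesgue with density bounded between two positive constants, so that a full or null statement for $\mu$ is equivalent to one for $\mathfrak{L}$. Because $(T_\beta,\mu)$ is exponentially (indeed exactly) mixing, the events $A_n=(T_\beta^n)^{-1}B_n$ are quasi-independent: there is a correlation estimate of the form $\mu(A_m\cap A_n)\leqslant\mu(A_m)\mu(A_n)+E(|m-n|)$ with $E$ decaying in $|m-n|$, which delivers the on-average bound $\sum_{m,n\leqslant N}\mu(A_m\cap A_n)\ll\big(\sum_{n\leqslant N}\mu(A_n)\big)^2$. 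Combined with $\sum_n\mu(A_n)\asymp\sum_n\Psi(n)=\infty$, the divergence (Chung--Erd\H{o}s) form of the Borel--Cantelli lemma gives $\mu(\limsup_n A_n)>0$.

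To upgrade positive measure to full measure I would localise the mixing argument. Running the same correlation estimate inside an arbitrary cylinder $I_k(\bar\epsilon)$, using that $T_\beta^k$ maps a full cylinder bijectively and linearly onto $[0,1]$, shows $\mu\big(\limsup_n A_n\mid I_k(\bar\epsilon)\big)\geqslant c$ for a constant $c>0$ independent of $k$ and $\bar\epsilon$. Since the cylinders generate the Borel $\sigma$-algebra and shrink to points, the Lebesgue density theorem then forces $\mu(\limsup_n A_n)=1$: a positive-measure complement would contain density-one points, contradicting the uniform lower bound $c$. This yields $\mathfrak{L}(W_y(T_\beta,\Psi))=1$, and the identical argument, with the symmetric interval replaced by the one-sided one, gives $\mathfrak{L}(W'_y(T_\beta,\Psi))=1$.

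The main obstacle is the divergence direction, and within it the quasi-independence estimate. The non-finite Markov structure of $\beta$-expansions means the cylinders $I_n(\bar\epsilon)$ are not all of the same length---full cylinders have length exactly $\beta^{-n}$, but the irregularly distributed non-full cylinders are shorter---so the correlation bounds must be made uniform without relying on a finite Markov partition. The clean inputs that make this tractable are the cardinality bounds \eqref{lem1}, which keep $\mathfrak{L}(A_n)$ comparable to $\Psi(n)$, together with the explicit form and mixing of the Parry measure, which supply the decay controlling $\mu(A_m\cap A_n)$. Once the localised lower bound is in hand, the density-theorem upgrade is routine.
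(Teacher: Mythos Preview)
The paper does not actually prove this proposition; it cites both statements as special cases of Philipp's general theorem, and its only argumentative contribution is the remark that the one-sided case for $W'_y$ can alternatively be deduced from the two-sided case for $W_y$ via Cassels' lemma (Lemma~2). Concretely: write each $A_n$ and $A_n'$ as a union of subintervals indexed by cylinders; each piece of $A_n'$ sits inside the corresponding piece of $A_n$ and occupies a fixed positive proportion of it, so Cassels' lemma gives $\mathfrak{L}(\limsup A_n')=\mathfrak{L}(\limsup A_n)$ directly.

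Your proposal is therefore a genuinely different route: a self-contained reconstruction of Philipp's argument rather than a citation. The outline is sound---the convergence half is immediate, and the divergence half via the Parry measure, exponential mixing, the Chung--Erd\H{o}s inequality, and a localised density upgrade is exactly the shape of the classical proof. Two small points of care: the correlation bound you quote should have a multiplicative constant, i.e.\ $\mu(A_m\cap A_n)\leqslant C\,\mu(A_m)\mu(A_n)$ for $|m-n|$ large, rather than an additive error; and in your localisation step not every cylinder $I_k(\bar\epsilon)$ is full, so the phrase ``$T_\beta^k$ maps a full cylinder bijectively onto $[0,1]$'' needs to be restricted to full cylinders---which is harmless, since full cylinders already generate the Borel $\sigma$-algebra. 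Compared with the paper's approach, yours is more laborious but self-contained; the paper's route is economical, getting the one-sided case in one line from Cassels' lemma once the two-sided case is granted, at the cost of treating Philipp's theorem as a black box.
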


While both parts of Proposition \ref{p1} are special cases of Philipp's result \cite{Phi}, as a sequence of intervals takes on the role of the balls $\{B(y, \Psi(n))\}_{n\geqslant 1}$, it is worth noting that the result for $W'_y(T_\beta, \Psi)$ can also be deduced from the result for $W_y(T_\beta, \Psi)$ using the Lebesgue density theorem. We cite a general result due to Cassels; see Harman \cite[Lemma 2.1]{Har}.

\begin{lemma}[Cassels]
Let $\{I_k\}_{k\geqslant 1}$ be a sequence of intervals such that $\mathfrak{L}(I_k)\to 0$ as $k\to\infty$. If $\{J_k\}_{k\geqslant 1}$ is a sequence of measurable sets such that $J_k\subseteq I_k$ for each $k\geqslant 1$, and there is a positive real number $\delta$ such that $\mathfrak{L}(J_k)\geqslant\delta\cdot\mathfrak{L}(I_k),$ then $$\mathfrak{L}\left(\limsup_{k\to \infty}J_k\right)=\mathfrak{L}\left(\limsup_{k\to \infty}I_k\right).$$
\end{lemma}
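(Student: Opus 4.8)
The plan is to establish the two inequalities between $\mathfrak{L}(\limsup_k J_k)$ and $\mathfrak{L}(\limsup_k I_k)$ separately. Writing $A=\limsup_k I_k$ and $B=\limsup_k J_k$, the inclusion $J_k\subseteq I_k$ for every $k$ immediately gives $B\subseteq A$, whence $\mathfrak{L}(B)\leqslant\mathfrak{L}(A)$. All the work therefore goes into the reverse inequality, which I would obtain by showing that the difference set $A\setminus B$ is Lebesgue-null.

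To do this, I would first slice $A\setminus B$ into a countable increasing union on which membership in the $J_k$ is controlled. A point lies in $A\setminus B$ precisely when it belongs to infinitely many $I_k$ but to only finitely many $J_k$, so setting
$$F_N=\left\{x\in A: x\notin J_k\ \text{for all}\ k\geqslant N\right\}\qquad(N\geqslant 1),$$
one has $F_N\subseteq A\setminus B$, the sets $F_N$ increase with $N$, and $\bigcup_{N\geqslant 1}F_N=A\setminus B$. If $\mathfrak{L}(A\setminus B)>0$ then $\mathfrak{L}(F_N)>0$ for some $N$, and I would fix such an $N$ and invoke the Lebesgue density theorem to produce a point $x_0\in F_N$ of density $1$ for $F_N$.

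The contradiction then comes from playing the density of $F_N$ at $x_0$ against the hypothesis $\mathfrak{L}(J_k)\geqslant\delta\,\mathfrak{L}(I_k)$. Since $x_0\in A$, it lies in $I_k$ for infinitely many $k$; because $\mathfrak{L}(I_k)\to 0$ I may select such an index $k\geqslant N$ with $\mathfrak{L}(I_k)$ as small as I please. On one hand, $k\geqslant N$ forces $F_N\cap J_k=\emptyset$, so $F_N\cap I_k\subseteq I_k\setminus J_k$ and hence $\mathfrak{L}(F_N\cap I_k)\leqslant(1-\delta)\,\mathfrak{L}(I_k)$. On the other hand, the density-point property should force $\mathfrak{L}(F_N\cap I_k)/\mathfrak{L}(I_k)\to 1$ along these indices, which is incompatible with the bound $1-\delta<1$ once $\mathfrak{L}(I_k)$ is small enough.

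I expect the one genuinely delicate step to be the last claim, because the Lebesgue density theorem is phrased for balls \emph{centred} at $x_0$, whereas $I_k$ is merely an interval \emph{containing} $x_0$ and is typically off-centre. The remedy is that intervals form a regular family: if $r_k=\mathfrak{L}(I_k)$ then $x_0\in I_k$ gives $I_k\subseteq[x_0-r_k,\,x_0+r_k]$, a ball of measure $2r_k$, so that
$$\frac{\mathfrak{L}(I_k\setminus F_N)}{\mathfrak{L}(I_k)}\leqslant\frac{\mathfrak{L}\big([x_0-r_k,x_0+r_k]\setminus F_N\big)}{r_k}=2\cdot\frac{\mathfrak{L}\big([x_0-r_k,x_0+r_k]\setminus F_N\big)}{\mathfrak{L}\big([x_0-r_k,x_0+r_k]\big)}.$$
The right-hand side tends to $0$ as $r_k\to 0$ because $x_0$ is a density point of $F_N$, which yields $\mathfrak{L}(F_N\cap I_k)/\mathfrak{L}(I_k)\to 1$ and closes the argument; the bounded distortion factor $2$ is harmless. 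Tracing the implications back gives $\mathfrak{L}(A\setminus B)=0$, that is $\mathfrak{L}(A)=\mathfrak{L}(B)$, as required.
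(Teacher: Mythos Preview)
Your argument is correct: the decomposition $A\setminus B=\bigcup_N F_N$, the selection of a density point of some $F_N$, and the comparison of $\mathfrak{L}(F_N\cap I_k)/\mathfrak{L}(I_k)$ against the upper bound $1-\delta$ all go through as written, and you handle the off-centre issue properly via the containment $I_k\subseteq[x_0-r_k,x_0+r_k]$. The only tacit assumption is that the relevant $I_k$ have positive length, but degenerate intervals contribute nothing to either limsup set and can be discarded at the outset.

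As for comparison with the paper: there is nothing to compare. The paper states the lemma as a citation (attributed to Cassels, with a reference to Harman's book) and gives no proof of its own. Your density-point argument is in fact the standard one and is essentially how the result is proved in the cited source.
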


We next state a variant of the `slicing' lemma due to Beresnevich and Velani~\cite{BV06_IMRN}. This version is tailored for our use, and is a key ingredient in the proof of Theorem~\ref{thm2}. The slicing technique is broad-ranging and has been  useful in proving several metrical results; for examples, see Hussain and Kristensen \cite{HK13_2, HK15} and Hussain and Levesley \cite{HL13}.

\begin{lemma}[Beresnevich and Velani]\label{slicing}
Suppose that $g$ and $f:r\to r^{-1}g(r)$ are dimension functions. Let $B\subseteq \mathbb{R}^2$ be a Borel set and let $V$ be a $1$-dimensional linear subspace of $\mathbb{R}^2$. If there is a subset $S$ of the orthogonal complement of $V$ such that $\mathcal{H}^1(S)>0$ and for each $b\in S$, $$\mathcal{H}^{f}\left(B\cap(V+b)\right)=\infty,$$ then $\mathcal{H}^{g}(B)=\infty.$
\end{lemma}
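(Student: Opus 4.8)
The plan is to reduce the statement to a single Fubini-type slicing inequality and then apply the hypothesis directly. Since every ingredient — the diameters defining $\mathcal{H}^f$ and $\mathcal{H}^g$, the subspace $V$, its orthogonal complement, and the measure $\mathcal{H}^1$ on that complement — is preserved by rotations of $\mathbb{R}^2$, I would first apply the rotation carrying $V$ to the horizontal axis. After this reduction the translates $V+b$ become the horizontal lines $L_t=\{(x,t):x\in\mathbb{R}\}$, the orthogonal complement is the vertical axis, $\mathcal{H}^1$ there is Lebesgue measure $\mathfrak{L}$ in the height $t$, and the set $S$ becomes a set of heights with $\mathfrak{L}(S)>0$.

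The heart of the argument is the inequality
$$\int^* \mathcal{H}^{f}(B\cap L_t)\,d\mathfrak{L}(t)\leqslant \mathcal{H}^{g}(B),$$
where $\int^*$ denotes the upper integral, used to sidestep any measurability issue for the slice function $t\mapsto\mathcal{H}^{f}(B\cap L_t)$. To prove it, fix small $\delta>0$ and $\varepsilon>0$ and choose a cover $\{U_i\}_{i\geqslant1}$ of $B$ by (open, or convex) sets of diameter at most $\delta$ with $\sum_i g(|U_i|)\leqslant \mathcal{H}^{g}_\delta(B)+\varepsilon$, where $\mathcal{H}^{g}_\delta$ is the usual premeasure. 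For each height $t$ the nonempty slices $U_i\cap L_t$ cover $B\cap L_t$ and each has diameter at most $|U_i|\leqslant\delta$, so monotonicity of the dimension function $f$ near $0$ gives $\mathcal{H}^{f}_\delta(B\cap L_t)\leqslant\sum_{i:\,U_i\cap L_t\neq\emptyset}f(|U_i|)$. Integrating in $t$ and using Tonelli's theorem to interchange the sum and the integral, the set of heights for which $U_i$ meets $L_t$ is the vertical projection of $U_i$, whose Lebesgue measure is at most $|U_i|$; hence
$$\int^* \mathcal{H}^{f}_\delta(B\cap L_t)\,d\mathfrak{L}(t)\leqslant \sum_i f(|U_i|)\,|U_i|=\sum_i g(|U_i|)\leqslant \mathcal{H}^{g}_\delta(B)+\varepsilon,$$
where the crucial middle equality is exactly the defining relation $r\,f(r)=g(r)$. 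Letting $\varepsilon\to0$ and then $\delta\to0$, and using that $\mathcal{H}^{f}_\delta(B\cap L_t)$ increases monotonically to $\mathcal{H}^{f}(B\cap L_t)$ so that monotone convergence applies to the upper integral, yields the displayed slicing inequality.

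With the inequality in hand the conclusion is immediate: by hypothesis $\mathcal{H}^{f}(B\cap L_t)=\infty$ for every $t\in S$ with $\mathfrak{L}(S)=\mathcal{H}^1(S)>0$, so the left-hand side is bounded below by $\int_S\infty\,d\mathfrak{L}(t)=+\infty$, forcing $\mathcal{H}^{g}(B)=\infty$. I expect the main obstacle to be the technical passage $\delta\to0$ together with the non-measurability of the slice function $t\mapsto\mathcal{H}^{f}(B\cap L_t)$, which is precisely why I work throughout with the upper integral and must verify that monotone convergence remains valid in that setting. A secondary point needing care is the choice of cover elements: taking the $U_i$ open or convex guarantees their vertical projections are intervals of length at most $|U_i|$, and that the slices $U_i\cap L_t$ are legitimate cover elements for the $f$-premeasure.
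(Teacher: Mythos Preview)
Your argument is correct and is precisely the standard Fubini-type slicing argument underlying this lemma: the rotation reduction, the pointwise cover bound on each slice, the integration against vertical projection length, and the identity $r\,f(r)=g(r)$ are exactly the steps in the original proof of Beresnevich and Velani. The technical points you flag --- using the upper integral to bypass measurability of $t\mapsto\mathcal{H}^f(B\cap L_t)$ and the $\delta\to0$ passage via monotone convergence for upper integrals of increasing families --- are handled correctly.

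There is, however, nothing in the paper to compare your proof against: the lemma is merely \emph{stated} here as a tailored variant of Beresnevich and Velani's slicing lemma, with a citation to \cite{BV06_IMRN} in lieu of proof. So you have supplied what the paper deliberately omits, and your argument coincides with the one found in the cited source.
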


The main ingredient in establishing Theorem \ref{thm1} is the {\em mass transference principle} of Beresnevich and Velani \cite{BV06}. Given a dimension function $f$ and a sequence of balls $B_i\subseteq \mathbb{R}$, by definition, $\limsup_{i\to\infty}B_i$ is precisely the set of points which lie in infinitely many of the balls $B_i$. Further, for a ball $B=B(x,r)$, set $B^f=B(x, f(r))$.  The following mass transference principle is tailored to suit our needs; for a general statement and further details, we refer the reader to the paper of Beresnevich and Velani \cite[Theorem 2]{BV06}.

\begin{proposition}[Mass Transference Principle]\label{p3} Let $\{B_i\}_{i\geqslant 1}$ be a sequence of balls in $\mathbb{R}$ with $|B_i|\to 0$ as $i\to \infty$ and let $f$ be a dimension function such that {$r^{-1}f(r)$ is non-decreasing as $r\to 0$}. Suppose that for any ball $B\subseteq\mathbb{R}$ \begin{equation*} \mathcal{H}^1(B\cap\limsup B_i^f)=\mathcal{H}^1(B).\end{equation*} Then for any ball $B\subseteq\mathbb{R}$, \begin{equation*} \mathcal{H}^f(B\cap\limsup B_i)=\mathcal{H}^f(B).\end{equation*}
\end{proposition}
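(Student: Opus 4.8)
The plan is to prove this by the \emph{mass transference principle} machinery of Beresnevich and Velani, specialised to the line, where every ball is simply an interval. Write $\Omega:=\limsup_{i\to\infty}B_i$ and $\Omega^f:=\limsup_{i\to\infty}B_i^f$. Since $B\cap\Omega\subseteq B$, the inequality $\mathcal{H}^f(B\cap\Omega)\leqslant\mathcal{H}^f(B)$ holds trivially by monotonicity of Hausdorff measure, so the whole content is the reverse inequality $\mathcal{H}^f(B\cap\Omega)\geqslant\mathcal{H}^f(B)$ for an arbitrary ball $B\subseteq\mathbb{R}$. I would first record the harmless reductions: if $\mathcal{H}^f(B)=0$ there is nothing to prove, and since $r^{-1}f(r)$ is monotonic the limit $G:=\lim_{r\to 0^+}r^{-1}f(r)$ exists in $[0,\infty]$; when $G<\infty$ the hypothesis forces $f(r)\asymp r$ near $0$, so $\mathcal{H}^f$ is comparable to Lebesgue measure $\mathfrak{L}=\mathcal{H}^1$ on $\mathbb{R}$ and the conclusion is immediate from $\mathfrak{L}(B\cap\Omega^f)=\mathfrak{L}(B)$. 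This isolates the essential case $G=\infty$, in which a covering estimate using the monotonicity shows $\mathcal{H}^f(B)=\infty$, so that it suffices to prove $\mathcal{H}^f(B\cap\Omega)=\infty$.

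The heart of the argument is a Cantor-type construction inside $B\cap\Omega$ carrying a well-behaved measure. Starting from $B$, I would build a nested family of intervals generation by generation. At each generation, inside every surviving interval $J$ I invoke the hypothesis $\mathfrak{L}(J\cap\Omega^f)=\mathfrak{L}(J)$ together with a Vitali-type covering argument to extract finitely many of the enlarged balls $B_i^f$ that are pairwise disjoint, lie strictly inside $J$, and whose total length is a fixed proportion of $\mathfrak{L}(J)$. Each such $B_i^f=B(x_i,f(r_i))$ is then \emph{contracted} to the genuine ball $B_i=B(x_i,r_i)$ belonging to the family defining $\Omega$; these contracted balls form the next generation. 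Taking the intersection over all generations produces a compact set $\mathbf{K}\subseteq B$, and because every point of $\mathbf{K}$ lies in infinitely many of the $B_i$, we obtain $\mathbf{K}\subseteq B\cap\Omega$.

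Next I would place a probability measure $\mu$ on $\mathbf{K}$ by distributing mass from parent to children in proportion to the $f$-weight $f(r_i)$ of the contracted balls, normalised within each parent. The crucial analytic step is the verification of the Mass Distribution Principle hypothesis: there is a constant $c>0$ with $\mu(A)\leqslant c\,f(|A|)$ for every sufficiently small ball $A$. This is established by a case analysis according to how $|A|$ compares with the interval lengths and the inter-ball gaps of two consecutive generations; the monotonicity of $r^{-1}f(r)$ is exactly the input that permits the comparison of $f$-values across these different scales and makes each case go through. The Mass Distribution Principle then yields $\mathcal{H}^f(\mathbf{K})>0$, and since the construction can be carried out inside an arbitrarily small sub-ball of $B$ (rescaling the total mass), a standard localisation and exhaustion argument upgrades positivity to $\mathcal{H}^f(B\cap\Omega)=\infty=\mathcal{H}^f(B)$.

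The main obstacle is precisely the mass distribution step, that is, obtaining the uniform bound $\mu(A)\leqslant c\,f(|A|)$ for all balls $A$: the construction must be engineered so that consecutive generations are sufficiently separated in scale, with each child much smaller than its parent, for the monotonicity of $r^{-1}f(r)$ to dominate the competing contributions, while the selection of disjoint $B_i^f$ retaining a fixed proportion of the parent's measure must remain possible at every stage. Balancing these two competing requirements simultaneously, rather than any single estimate, is what makes the argument delicate.
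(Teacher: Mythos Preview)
The paper does not supply its own proof of this proposition; it simply records the one–dimensional specialisation of Beresnevich and Velani's mass transference principle and refers the reader to \cite[Theorem~2]{BV06} for the argument. Your outline is a faithful high-level summary of that original Beresnevich--Velani proof: the reduction to the case $r^{-1}f(r)\to\infty$, the $K_{G,B}$-type extraction of well-separated sub-balls capturing a fixed proportion of measure, the Cantor construction with an associated mass distribution, and the verification of the bound $\mu(A)\ll f(|A|)$ via the monotonicity of $r^{-1}f(r)$. In that sense your proposal and the paper's treatment agree, since both ultimately rest on \cite{BV06}.

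The only point worth flagging is one of emphasis rather than correctness. The paper, when it later proves its variant Proposition~\ref{p4}, isolates the $K_{G,B}$ Lemma as the single ingredient requiring adjustment and then declares the rest of the argument to follow \emph{mutatis mutandis}. Your sketch instead absorbs the $K_{G,B}$ step into a generic ``Vitali-type covering argument''. That is fine for Proposition~\ref{p3} as stated, but if you intend your write-up also to serve as a template for the one-sided variant used later, it would be cleaner to state the $K_{G,B}$ extraction as a separate lemma, since that is precisely the component the paper modifies.
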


In essence, the mass transference principle allows one to translate statements about the Lebesgue measure of general limsup sets to ones involving Hausdorff measure. So, using Proposition \ref{p1}, one should be able to say something about the Hausdorff measure of $W_y(T_{\beta}, \Psi)$. Indeed, this turns out to be the case, but we must first make some minor modifications to the mass transference principle.

\begin{proposition}[A variant of Mass Transference Principle]\label{p4} Let $\{x_n\}_{n\geqslant 1}$ be a sequence of points in $[0,1]$ and $\{r_n\}_{n\geqslant 1}$ a sequence of positive numbers with $r_n\to 0$ as $n\to \infty$. Let $f$ be a dimension function such that {$r^{-1}f(r)$ is non-decreasing as $r\to 0$}. If  \begin{equation}\label{f4} \mathfrak{L}\left(\left\{x\in [0,1]:0\leqslant x-x_n< f(r_n)\mbox{ for infinitely many $n$}\right\}\right)=1,\end{equation} then for any ball $B\subseteq\mathbb{R}$, $$\mathcal{H}^f\Big(B\cap \left\{x\in [0,1]:0\leqslant x-x_n< r_n\mbox{ for infinitely many $n$}\right\}\Big)=\infty.$$
\end{proposition}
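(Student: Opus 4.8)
The plan is to reduce Proposition~\ref{p4} to the standard Mass Transference Principle (Proposition~\ref{p3}) by replacing the one-sided intervals $[x_n, x_n+r_n)$ with genuine balls. The obstacle is purely cosmetic: Proposition~\ref{p3} is stated for balls $B_i=B(x,r)$, i.e. symmetric intervals, and the set $\limsup B_i^f$ appearing in its hypothesis is built from $f$-scaled \emph{balls}, whereas \eqref{f4} is a statement about one-sided intervals. So first I would re-centre the intervals. For each $n$ set $c_n := x_n + r_n/2$ and let $B_n := B(c_n, r_n/2)$, so that $B_n=(x_n, x_n+r_n)$ as an interval and $|B_n|\to 0$ since $r_n\to 0$. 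With this choice the target set
\[
\bigl\{x\in[0,1]: 0\leqslant x-x_n<r_n \text{ for infinitely many } n\bigr\}
\]
agrees, up to the countable set of left endpoints $\{x_n\}$ (which has $\mathcal{H}^f$-measure zero when $f$ is a genuine dimension function, and in any case does not affect a $\limsup$), with $\limsup_{n} B_n$. Thus the conclusion we want is exactly $\mathcal{H}^f(B\cap\limsup_n B_n)=\mathcal{H}^f(B)=\infty$ for every ball $B$, noting that $\mathcal{H}^f(B)=\infty$ for any ball $B$ because $r^{-1}f(r)$ is non-decreasing (equivalently $f(r)/r\to\infty$ or is bounded below) forces $\mathcal{H}^f$ on a nondegenerate interval to be infinite; this is the standard fact that makes the ``full measure'' conclusion of Proposition~\ref{p3} read as ``$=\infty$'' here.

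The second and main step is to verify the hypothesis of Proposition~\ref{p3}, namely that $\mathcal{H}^1(B\cap\limsup_n B_n^f)=\mathcal{H}^1(B)$ for every ball $B$. By the definition $B_n^f=B(c_n, f(r_n/2))$, so $\limsup_n B_n^f$ is the set of $x$ lying in infinitely many intervals $(c_n - f(r_n/2), c_n + f(r_n/2))$. The given full-measure hypothesis \eqref{f4} concerns the one-sided intervals $[x_n, x_n+f(r_n))$ instead. To bridge these I would invoke the Cassels lemma (the Lemma of Cassels cited via Harman): take $I_n := B_n^f = (c_n-f(r_n/2), c_n+f(r_n/2))$ as the enveloping intervals, which satisfy $\mathfrak{L}(I_n)=2f(r_n/2)\to 0$, and take $J_n := [x_n, x_n+f(r_n))\cap I_n$ as the measurable subsets. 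The content of the verification is the comparison $f(r_n)\asymp f(r_n/2)$, which holds because $r^{-1}f(r)$ is monotonic: from $f(r_n/2)\leqslant f(r_n)\leqslant 2\,\frac{f(r_n/2)}{r_n/2}\cdot\frac{r_n}{2}=2 f(r_n/2)$ (using monotonicity of $f(r)/r$ in the appropriate direction) one gets both that each $J_n$ has length comparable to a fixed proportion of $|I_n|$ and that, after translating \eqref{f4}, the $\limsup$ of the $J_n$ has full Lebesgue measure in $[0,1]$.

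Putting these together, the Cassels lemma upgrades the full-measure statement for $\limsup_n J_n$ to the full-measure statement $\mathfrak{L}(\limsup_n I_n)=1$, i.e. $\mathcal{H}^1(B\cap\limsup_n B_n^f)=\mathcal{H}^1(B)$ for every ball $B$ (intersecting with $B$ and using that the sets are contained in $[0,1]$ and are $\limsup$ sets, so the full-measure property localises to every subinterval). This is precisely the hypothesis of Proposition~\ref{p3}, whose conclusion then yields $\mathcal{H}^f(B\cap\limsup_n B_n)=\mathcal{H}^f(B)=\infty$, which is the assertion of Proposition~\ref{p4}. The only genuinely delicate point is the doubling estimate $f(r_n)\asymp f(r_n/2)$: it is the single place where the monotonicity hypothesis on $r^{-1}f(r)$ is used, and care is needed to check it holds whether $r^{-1}f(r)$ is non-increasing or non-decreasing, so that the Cassels comparison $\mathfrak{L}(J_n)\geqslant\delta\,\mathfrak{L}(I_n)$ holds with a uniform $\delta>0$ and simultaneously the $\limsup$ sets line up correctly.
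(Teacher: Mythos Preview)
Your reduction to the standard Mass Transference Principle via re-centring and Cassels' lemma is a genuinely different route from the paper's. The authors do not reduce to Proposition~\ref{p3}; instead they rerun the Beresnevich--Velani proof with a variant of the $K_{G,B}$ lemma in which balls are replaced by the half-intervals $[x_n,x_n+r_n)$, observing that these half-intervals are ``half of the balls'' in the original argument and that the covering/packing estimates go through with only cosmetic changes. Your approach is more modular---it treats Proposition~\ref{p3} as a black box rather than re-entering its proof---and this buys some economy; the paper's approach avoids the doubling manoeuvre entirely at the cost of quoting the internal structure of the MTP proof.

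There is, however, an execution slip in your Cassels step. You take $I_n:=B_n^f$ and $J_n:=[x_n,x_n+f(r_n))\cap I_n$ and then assert that $\limsup_n J_n$ has full Lebesgue measure ``after translating~\eqref{f4}''. But $J_n$ can be strictly smaller than $[x_n,x_n+f(r_n))$ (neither of $[x_n,x_n+f(r_n))$ and $B_n^f$ need contain the other), so full measure of $\limsup_n[x_n,x_n+f(r_n))$ does not pass directly to $\limsup_n J_n$. The fix is simply to reverse the roles: set $I_n:=[x_n,x_n+f(r_n))$ (whose $\limsup$ has full measure by hypothesis) and $J_n:=I_n\cap B_n^f$. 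Your doubling estimate $f(r_n/2)\leqslant f(r_n)\leqslant 2f(r_n/2)$, together with the fact that (eventually) $f(r_n/2)>r_n/2$ in the regime $f(r)/r\to\infty$, gives $|J_n|\geqslant f(r_n/2)\geqslant \tfrac12|I_n|$; Cassels then yields $\mathfrak{L}(\limsup_n J_n)=1$, and since $J_n\subseteq B_n^f$ this gives $\mathfrak{L}(\limsup_n B_n^f)=1$, which is exactly the hypothesis of Proposition~\ref{p3}. (In the complementary case where $f(r)/r$ stays bounded the conclusion $\mathcal{H}^f(\cdot)=\infty$ cannot hold for subsets of $[0,1]$; the paper handles this case separately in the proof of Theorem~\ref{thm1}, so it is harmless to assume $f(r)/r\to\infty$ here.)
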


With the use of a tiny variant of the $K_{G,B}$ Lemma \cite[Lemma 5]{BV06}, our variant of the mass transference principle is proved, {\em mutatis mutandis}, as Proposition \ref{p3} (see Beresnevich and Velani \cite[Theorem 2]{BV06}), thus we only present a variant of  the $K_{G,B}$ Lemma. 


%
For a subset $\mathcal{K}\subseteq\{[x_n, x_n+r_n):n\geqslant 1\}$, we define $$
\mathcal{K}^f:=\left\{B^f(x_n,r_n): [x_n,x_n+r_n)\in \mathcal{K}\right\},$$ where $B^f(x_n,r_n)$ denotes the ball of radius $f(r_n)$ centred at $x_n$.

\begin{lemma}[A variant of $K_{G,B}$ Lemma]\label{KGB} Assume that the equation in (\ref{f4}) holds and let $B$ be a ball in $[0,1]$. For any $G\geqslant 1$, there exists a subset $K_{G,B}\subseteq\{[x_n, x_n+r_n)\}_{n\geqslant G}$ such that
the elements of $K_{G,B}^f$ are disjoint, inside $B$ and $$
\sum_{L\in K_{G,B}}f(r_L)\geqslant \frac{|B|}{20},
$$ where $r_{L}$ denotes the radius of the ball $L$.
\end{lemma}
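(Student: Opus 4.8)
The plan is to adapt the proof of the $K_{G,B}$ Lemma from Beresnevich and Velani \cite[Lemma 5]{BV06}, making the minimal changes needed to accommodate one-sided intervals $[x_n, x_n+r_n)$ in place of symmetric balls. The underlying strategy is a greedy Vitali-type selection argument: from the full-measure limsup statement \eqref{f4}, I extract a disjoint subfamily whose enlarged members $\mathcal{K}^f$ still capture a definite proportion of the measure of $B$. First I would fix $G\geqslant 1$ and the ball $B\subseteq[0,1]$, and consider the set $$W_f:=\left\{x\in[0,1]:0\leqslant x-x_n<f(r_n)\mbox{ for infinitely many }n\right\},$$ which by \eqref{f4} has full Lebesgue measure, so $\mathfrak{L}(B\cap W_f)=|B|$. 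Since the condition holds for infinitely many $n$, I may restrict attention to indices $n\geqslant G$ without losing the full-measure property, because discarding finitely many indices does not affect a limsup set.

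Next I would set up the greedy selection. The collection $\{B^f(x_n,r_n):n\geqslant G,\ B^f(x_n,r_n)\subseteq B\}$ covers $B\cap W_f$ up to a set that can be controlled, since every point of $W_f$ lies in $B^f(x_n,r_n)=[x_n-f(r_n),x_n+f(r_n)]$ for infinitely many $n$ (noting $0\leqslant x-x_n<f(r_n)$ forces $x\in B^f(x_n,r_n)$). Because $r_n\to 0$ and hence $f(r_n)\to 0$, I can throw away the finitely many balls that protrude beyond $B$ and still retain full measure on a slightly shrunken ball, or equivalently apply the $5r$-covering lemma (Vitali) to the enlarged family to extract a countable disjoint subfamily $\{B^f(x_{n_j},r_{n_j})\}_j$ with $$\sum_j \mathfrak{L}\bigl(5B^f(x_{n_j},r_{n_j})\bigr)\geqslant \mathfrak{L}(B\cap W_f)=|B|.$$ Since $\mathfrak{L}(5B^f)=5\cdot 2f(r_{n_j})=10f(r_{n_j})$ in $\mathbb{R}$, this rearranges to $\sum_j f(r_{n_j})\geqslant |B|/20$, and I would define $K_{G,B}$ to be the corresponding subfamily $\{[x_{n_j},x_{n_j}+r_{n_j})\}_j$, so that $K_{G,B}^f$ is disjoint and contained in $B$ by construction.

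I expect the main obstacle to be the bookkeeping that ensures the selected enlarged intervals $B^f(x_{n_j},r_{n_j})$ lie \emph{inside} $B$ while still summing to at least $|B|/20$; the standard argument handles this by first passing to a concentric sub-ball $B'\subseteq B$ with $|B'|$ comparable to $|B|$ (losing only a fixed constant factor that is absorbed into the denominator $20$), then applying Vitali only to those enlarged intervals meeting $B'$ whose radii are small enough to guarantee containment in $B$. The one-sided nature of the defining condition $0\leqslant x-x_n<f(r_n)$ requires care in verifying that the \emph{centred} enlargement $B^f(x_n,r_n)$ genuinely covers the relevant portion of $W_f$, but since centring at $x_n$ with radius $f(r_n)$ contains the half-open interval $[x_n,x_n+f(r_n))$, this causes no difficulty and only the constant is affected. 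With the covering lemma and the non-decreasing hypothesis on $r^{-1}f(r)$ in hand, the remaining estimates are routine and identical in spirit to \cite[Lemma 5]{BV06}, so the variant follows \emph{mutatis mutandis}.
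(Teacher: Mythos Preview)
Your proposal is correct and follows the same essential route as the paper. The paper's proof is a single sentence: it observes that the half-intervals $[x_n,x_n+r_n)$ are precisely the right halves of the balls $B(x_n,r_n)$, so one may apply the original $K_{G,B}$ Lemma of Beresnevich and Velani directly (the full-measure hypothesis \eqref{f4} on the half-intervals implies the corresponding full-measure hypothesis on the centred balls $B^f(x_n,r_n)$) and then take the half-intervals corresponding to the selected balls. Your plan simply unpacks the Vitali-type argument underlying that lemma; the constant $20$ arises in both cases for the same reason.
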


\begin{proof}
The elements in $K_{G,B}$ here are nothing but half of the balls in Beresnevich and Velani's original $K_{G,B}$-Lemma \cite[Lemma 5]{BV06}.

\end{proof}

\begin{remark} Note that while we state our variant of the mass transference principle only in the one-dimensional case, it is still valid for higher dimensions.
\end{remark}

\section{Dichotomy laws for $\beta$-dynamical systems}

In this section, we establish the Jarn\'ik-type dichotomy laws of Theorems \ref{thm1} and~\ref{thm2}.

\begin{proof}[Proof of Theorem \ref{thm1}]
We consider, separately, the cases of convergence and divergence of the series \begin{equation}\label{bfb}\sum_{n\geqslant 1}\beta^n f\left(\frac{\Psi(n)}{\beta^n}\right).\end{equation}

Suppose that the series \eqref{bfb} converges. We begin by writing the set $W_y(T_\beta,\Psi)$ in a way that reflects its limsup nature. To do this, for any $\bar{\epsilon}=(\epsilon_1,\ldots,\epsilon_n)\in D_{\beta,n}$, we define \begin{equation}\label{yn}
y_n(\bar{\epsilon})=\frac{\epsilon_1}{\beta}+\cdots+\frac{\epsilon_n}{\beta^n}+\frac{y}{\beta^n}.\end{equation} We then have
\begin{align}
W_y(T_{\beta},\Psi)
\nonumber&=\bigcap_{N\geqslant 1}\bigcup_{n\geqslant N}\left\{x\in [0,1]: |T_\beta^n x-y|<\Psi(n)\right\}\\
\nonumber&=\bigcap_{N\geqslant 1}\bigcup_{\substack{n\geqslant N\\ \bar{\epsilon}\in D_{\beta,n}}}\left\{x\in I_n(\bar{\epsilon}): |T_\beta^n x-y|<\Psi(n)\right\}\\
\label{Inynyn}&=\bigcap_{N\geqslant 1}\bigcup_{\substack{n\geqslant N\\ \bar{\epsilon}\in D_{\beta,n}}}\left\{I_n(\bar{\epsilon})\cap \left(y_n(\bar{\epsilon})-\frac{\Psi(n)}{\beta^n},y_n(\bar{\epsilon})+  \frac{\Psi(n)}{\beta^n}\right)\right\},\end{align}
where, to obtain the last equality, we substituted the value of $T_\beta^n x$ in terms of $\bar{\epsilon}$, which is determined by \eqref{eq5}. 
{Note that the set inside the union in \eqref{Inynyn} can be covered by two intervals each of length $\frac{\Psi(n)}{\beta^n}$, thus} along with the definition of Hausdorff measure, the quantity $\mathcal{H}^f\left(W_y(T_\beta, \Psi)\right)$ is bounded by $$
\liminf_{N\to\infty}\sum_{n\geqslant N}\sum_{\bar{\epsilon}\in D_{\beta,n}}2\cdot f\left(\frac{\Psi(n)}{\beta^n}\right)
\leqslant \frac{2\beta}{\beta-1}\liminf_{N\to\infty}\sum_{n\geqslant N}\beta^n f\left(\frac{\Psi(n)}{\beta^n}\right)=0,
$$ where for the second inequality, we used $\#D_{\beta,n}\leqslant \beta^{n+1}/(\beta-1)$ as given in \eqref{lem1}.

Now suppose that the series \eqref{bfb} diverges. If $\{f(r)/r:r>0\}$ is bounded, $\mathcal{H}^f= c\cdot\mathfrak{L}$ for a constant $c>0$. Then Philipp's result applies. So, we assume that $f(r)/r\to \infty$ as $r\to 0$. Moreover, instead of studying the set $W_y(T_\beta, \Psi)$ directly, we consider the sets $W_y'(T_\beta, \Psi)$ (as defined in Proposition \ref{p1}) and $$W_y'(T_\beta, \Psi,f)=\left\{x\in [0,1]: 0\leqslant T_\beta^n x-y<\beta^n f\left(\frac{\Psi(n)}{\beta^n}\right)\mbox{ for infinitely many $n$}\right\}.$$ Because of our divergence assumption on the series \eqref{bfb}, by Proposition \ref{p1}, we have $\mathfrak{L}\left(W_y'(T_\beta, \Psi,f)\right)=1$.

Similar to the above, we may write
$$W_y'(T_\beta, \Psi)=\bigcap_{N\geqslant 1}\bigcup_{\substack{n\geqslant N\\\bar{\epsilon}\in D_{\beta,n}}}\left\{I_n(\bar{\epsilon})\cap \left[y_n(\bar{\epsilon}),y_n(\bar{\epsilon})+\frac{\Psi(n)}{\beta^n}\right)\right\}$$ and $$W_y'(T_\beta, \Psi,f):=\bigcap_{N\geqslant 1}\bigcup_{\substack{n\geqslant N\\\bar{\epsilon}\in D_{\beta,n}}}\left\{I_n(\bar{\epsilon})\cap \left[y_n(\bar{\epsilon}),y_n(\bar{\epsilon})+f\left(\frac{\Psi(n)}{\beta^n}\right)\right)\right\},$$ where $y_n(\bar{\epsilon})$ is as given in \eqref{yn}.

Since $y_n(\bar{\epsilon})$ is larger than the left endpoint of $I_n(\bar{\epsilon})$, we have \begin{align*}I_n(\bar{\epsilon})\cap \left[y_n(\bar{\epsilon}),y_n(\bar{\epsilon})+\frac{\Psi(n)}{\beta^n}\right)&=\left\{x\in [0,1]: 0\leqslant x-y_n(\bar{\epsilon})< r_n(\bar{\epsilon})\right\}\\ &=[y_n(\bar{\epsilon}), y_n(\bar{\epsilon})+r_n(\bar{\epsilon}))\end{align*} for some $r_n(\bar{\epsilon})\geqslant 0$ (we take $r_n(\bar{\epsilon})=0$ if the set is empty). So, \begin{equation}\label{2}W_y'(T_\beta, \Psi)=\bigcap_{N\geqslant 1}\bigcup_{\substack{n\geqslant N\\\bar{\epsilon}\in D_{\beta,n}}}\left\{x\in [0,1]: 0\leqslant x-y_n(\bar{\epsilon})< r_n(\bar{\epsilon})\right\}.\end{equation}

Similarly, \begin{align*}I_n(\bar{\epsilon})\cap \left[y_n(\bar{\epsilon}),y_n(\bar{\epsilon})+f\left(\frac{\Psi(n)}{\beta^n}\right)\right) &=\left\{x\in [0,1]: 0\leqslant x-y_n(\bar{\epsilon})< t_n(\bar{\epsilon})\right\}\\ &=[y_n(\bar{\epsilon}), y_n(\bar{\epsilon})+t_n(\bar{\epsilon}))\end{align*} for some $t_n(\bar{\epsilon})\geqslant 0$ (we take $t_n(\bar{\epsilon})=0$ if the set is empty), and so $$W_y'(T_\beta, \Psi,f)=\bigcap_{N\geqslant 1}\bigcup_{\substack{n\geqslant N\\\bar{\epsilon}\in D_{\beta,n}}}\left\{x\in [0,1]: 0\leqslant x-y_n(\bar{\epsilon})< t_n(\bar{\epsilon})\right\}.$$

We claim that for sufficiently large $n$, \begin{equation}\label{15} f(r_n(\bar{\epsilon}))\geqslant t_n(\bar{\epsilon}),\end{equation} for any $\bar{\epsilon}\in D_{\beta,n}$. To see that this is indeed the case, let $b$ be the right endpoint of $I_n(\bar{\epsilon})$. Then $$y_n(\bar{\epsilon})+r_n(\bar{\epsilon})=\min\left\{b, y_n(\bar{\epsilon})+\frac{\Psi(n)}{\beta^n}\right\}$$ and $$y_n(\bar{\epsilon})+t_n(\bar{\epsilon})=\min\left\{b, y_n(\bar{\epsilon})+f\left(\frac{\Psi(n)}{\beta^n}\right)\right\},$$ so that $$
r_n(\bar{\epsilon})=\min\left\{b-y_n(\bar{\epsilon}), \frac{\Psi(n)}{\beta^n}\right\}\quad\mbox{and}\quad t_n(\bar{\epsilon})=\min\left\{b-y_n(\bar{\epsilon}), f\left(\frac{\Psi(n)}{\beta^n}\right)\right\}.$$ Now if $t_n(\bar{\epsilon})=0$, there is nothing to prove. When $t_n(\bar{\epsilon})>0$, we have $y_n(\bar{\epsilon})<b$, thus we have $r_n(\bar{\epsilon})>0$ as well. Since $r^{-1}f(r)\to\infty$ as $r\to 0$ and $0\leqslant b-y_n(\bar{\epsilon})\leqslant \beta^{-n}$, we have that $f(b-y_n(\bar{\epsilon}))\geqslant b-y_n(\bar{\epsilon})$ when $n$ is sufficiently large, which proves the claim. Note also that $0\leqslant r_n(\bar{\epsilon})\leqslant \beta^{-n}$, so that $r_n(\bar{\epsilon})\to 0$ as $n\to\infty$.

Having established \eqref{15}, the limsup set\begin{equation}
\label{Wfrn} \bigcap_{N\geqslant 1}\bigcup_{\substack{n\geqslant N\\ \bar{\epsilon}\in D_{\beta,n}}}\left\{x\in [0,1]: 0\leqslant x-y_n(\bar{\epsilon})<f(r_n(\bar{\epsilon}))\right\}\end{equation}
contains the set $W_y'(T_\beta, \Psi, f)$, so it is also of full Lebesgue measure. Then a direct application of our variant of the mass transference principle (Proposition \ref{p4}) yields $\mathcal{H}^f(W_y'(T_\beta, \Psi))=\infty.$ As $W_y'(T_\beta, \Psi)\subseteq W_y(T_\beta, \Psi)$, the result follows.
\end{proof}

\begin{remark}
Loosely speaking, $y_n(\bar{\epsilon})$ is the inverse $T_\beta^{-n}y$ of $y$ in the cylinder $I_n(\bar{\epsilon})$. But, this is not always the case. When the length of the interval $I_n(\bar{\epsilon})$ is strictly less than $\beta^{-n}$, for any $y>\beta^n|I_n(\bar{\epsilon})|$, there does not exist $x\in I_n(\bar{\epsilon})$, such that $T_\beta^n x=y$. So, it is possible that $$I_n(\bar{\epsilon})\cap \left(y_n(\bar{\epsilon})-\frac{\Psi(n)}{\beta^n},y_n(\bar{\epsilon})+\frac{\Psi(n)}{\beta^n}\right)$$ is empty. In view of this, the magnified set $$I_n(\bar{\epsilon})\cap \left(y_n(\bar{\epsilon})-f\Big(\frac{\Psi(n)}{\beta^n}\Big),y_n(\bar{\epsilon})+f\Big(\frac{\Psi(n)}{\beta^n}\Big)\right)$$
may contribute  to the Lebesgue measure of the limsup set $$\left\{x\in [0,1]: |T_\beta^n x-y|<\beta^n f\left(\frac{\Psi(n)}{\beta^n}\right)\mbox{ for infinitely many $n$}\right\},$$ while the shrunk set $$I_n(\bar{\epsilon})\cap \left(y_n(\bar{\epsilon})-\frac{\Psi(n)}{\beta^n},y_n(\bar{\epsilon})+\frac{\Psi(n)}{\beta^n}\right)$$ may contribute nothing to the size of $W_y(T, \Psi)$.
This is the main reason we consider the subset $W'_y(T, \Psi)$, instead of $W_y(T, \Psi)$.

%

\end{remark}

We continue this section with the proof of Theorem \ref{thm2}.

\begin{proof}[Proof of Theorem \ref{thm2}]
We consider, separately, the cases of convergence and divergence of the series \begin{equation}\label{PbfPb}\sum_{n\geqslant 1} g\left(\frac{\Psi(n)}{\beta^n}\right)\frac{\beta^{2n}}{\Psi(n)}.\end{equation}

As in the proof of Theorem \ref{thm1}, we begin by writing the limsup version of the set $W(T_\beta,\Psi)$. To do this, for each $n\in\mathbb{N}$, set \begin{equation}\label{eq2}W_n(T_\beta,\Psi):=\left\{(x, y)\in [0,1]^2:|T_\beta^n x-y|<\Psi(n)\right\}.\end{equation} Then \begin{equation*}
W(T_\beta,\Psi)=\limsup_{n\to\infty}W_n(T_\beta,\Psi)=\bigcap_{N\geqslant 1}\bigcup_{n\geqslant N}W_n(T_\beta,\Psi).
\end{equation*}

On the other hand, the interval $[0,1]$, which corresponds to the doubly metric parameter $y$, can also be written as a union of intervals $$J_n(i)=\left[\frac{i\Psi(n)}{\beta^n}, \frac{(i+1)\Psi(n)}{\beta^n}\right]\cap[0,1]$$ over all $0\leqslant i\leqslant \lfloor\beta^n/\Psi(n)\rfloor$; that is, \begin{equation}\label{eq3} [0,1]=\bigcup_{0\leqslant i\leqslant \lfloor\beta^n/\Psi(n)\rfloor}J_n(i).\end{equation}

Combining \eqref{eq1} and \eqref{eq3}, we have \begin{equation*}[0,1]^2=\bigcup_{\substack{\bar{\epsilon}\in D_{\beta,n}\\ 0\leqslant i\leqslant \lfloor\beta^n/\Psi(n)\rfloor}}I_n(\bar{\epsilon})\times J_n(i).\end{equation*} This, when combined with \eqref{eq2}, gives \begin{equation*}\label{eq4} W_n(T_\beta,\Psi)=\bigcup_{\substack{\bar{\epsilon}\in D_{\beta,n}\\ 0\leqslant i\leqslant \lfloor\beta^n/\Psi(n)\rfloor}}\left\{(x, y)\in I_n(\bar{\epsilon})\times J_n(i):|T_\beta^n x-y|<\Psi(n)\right\}.\end{equation*} Note that, given any $\bar{\epsilon}\in D_{\beta,n}$ and $0\leqslant i\leqslant \lfloor\beta^n/\Psi(n)\rfloor$, 
{if $$(x, y)\in \left\{I_n(\bar{\epsilon})\times J_n(i):|T_\beta^n x-y|<\Psi(n)\right\}$$}then 
$$\left|T^n_\beta x-\frac{i\Psi(n)}{\beta^n}\right|\leqslant \left|T^n_\beta x-y\right|+\left|y-\frac{i\Psi(n)}{\beta^n}\right| \leqslant \Psi(n)+\frac{\Psi(n)}{\beta^n}<2\Psi(n).$$ 
Set $$z_n=\frac{\epsilon_1}{\beta}+\cdots+\frac{\epsilon_n}{\beta^n}+\frac{i\Psi(n)}{\beta^n}.$$ Then, similar to the convergence part of the proof of Theorem \ref{thm1}, it follows that
$$x\in\left(z_n-\frac{2\Psi(n)}{\beta^n}, z_n+\frac{2\Psi(n)}{\beta^n}\right).$$ Thus $$W_n(T_\beta,\Psi)\subseteq \bigcup_{\substack{\bar{\epsilon}\in D_{\beta,n}\\ 0\leqslant i\leqslant \lfloor\beta^n/\Psi(n)\rfloor}} \left(z_n-\frac{2\Psi(n)}{\beta^n}, z_n+\frac{2\Psi(n)}{\beta^n}\right) \times J_n(i).$$
As a result, {for $N\geqslant 1$} , the family of rectangles $$\bigcup_{\substack{n\geqslant N\\ \bar{\epsilon}\in D_{\beta,n}\\ 0\leqslant i\leqslant \lfloor\beta^n/\Psi(n)\rfloor}} \left(z_n-\frac{2\Psi(n)}{\beta^n}, z_n+\frac{2\Psi(n)}{\beta^n}\right) \times J_n(i)$$ is a cover for the set $W_n(T_\beta,\Psi)$. Moreover, each of these rectangles can be covered by $64$ cubes with diameter $\Psi(n)/\beta^n$.
So, by the definition of Hausdorff measure, it follows, using \eqref{lem1}, that $\mathcal{H}^g(W(T_\beta,\Psi))$ is bounded by a constant times
\begin{equation}\label{limgPb}\liminf_{N\to \infty}\sum_{n\geqslant N}\sum_{\bar{\epsilon}\in D_{\beta,n}} \sum_{0\leqslant i\leqslant \lfloor\beta^n/\Psi(n)\rfloor}g\left(\frac{\Psi(n)}{\beta^n}\right) 
\ll \liminf_{N\to\infty}\sum_{n\geqslant N} g\left(\frac{\Psi(n)}{\beta^n}\right)\frac{\beta^{2n}}{\Psi(n)}.\end{equation}

If the series in \eqref{PbfPb} converges, it follows from \eqref{limgPb} that $\mathcal{H}^g(W(T_\beta,\Psi))=0.$

Now suppose that the series in \eqref{PbfPb} diverges, and set $f(x)=x^{-1}g(x)$. Then $$
\sum_{n\geqslant 1} f\left(\frac{\Psi(n)}{\beta^n}\right)\beta^n=\sum_{n\geqslant 1} g\left(\frac{\Psi(n)}{\beta^n}\right)\frac{\beta^{2n}}{\Psi(n)}=\infty.
$$By Theorem~\ref{thm1}, we have shown that for any fixed $y\in [0,1]$, $\mathcal{H}^f(W_y(T_\beta, \Psi))=\mathcal{H}^f([0,1]).$ Then, appealing to the slicing lemma (Lemma \ref{slicing}), we have immediately that $\mathcal{H}^g(W(T_\beta, \Psi))=\mathcal{H}^g([0,1]^2).$
\end{proof}

\section{Concluding remarks and questions}

In this paper, we considered the $\beta$-expansion of real numbers for real values of $\beta>1$. Of course, it is very interesting to restrict $\beta$ to the set of algebraic numbers, and even more interesting to restrict both $\beta$ and the numbers we are expanding to be algebraic numbers. To the best of our knowledge, the only known consideration of this case was made by Bugeaud \cite{B2009} who gave results concerning the number of digit changes in the $\beta$-expansion of algebraic numbers (where $\beta$ is algebraic). The questions of Hausdorff dimension and Hausdorff measure, as well as the possibility of any dichotomy law, are still open.

As we mentioned in the Introduction, the set $W_y(T_\beta, \Psi)$ is closely related to the distribution of the preimage $${\rm Pre}(T_\beta, y)=\left\{T_\beta^{-n}y, n\in \mathbb{N}\right\}$$ of $y$. Given a general compact metric space $(X, T)$, one can consider the size of $$W(T, \Psi)=\left\{x\in X: |T^n x-y|<\Psi(n)\mbox{ for infinitely many $n$}\right\}.$$ If ${\rm Pre}(T, y)$ is {\em well}-distributed,
one hopes to derive some information on the Hausdorff dimension and Hausdorff measure of $W(T, \Psi)$. But how {\em well} should ${\rm Pre}(T, y)$ be distributed sufficient to get the dimension or even measure? Can one give a precise characterisation on this requirement? To which extent can a general system fulfill the required conditions?

\medskip

\noindent\emph{Acknowledgements.} MH would like to thank Stephen Harrap for many useful discussions and to members of the Departmetn of Mathematics at Huazhong University for their hospitality.

\bibliographystyle{amsplain}
\providecommand{\bysame}{\leavevmode\hbox to3em{\hrulefill}\thinspace}
\providecommand{\MR}{\relax\ifhmode\unskip\space\fi MR }
\providecommand{\MRhref}[2]{%
  \href{http://www.ams.org/mathscinet-getitem?mr=#1}{#2}
}
\providecommand{\href}[2]{#2}


\end{document}